\newtheorem{theorem}{Theorem}[section]
\newtheorem{corollary}[theorem]{Corollary}
\newtheorem{lemma}[theorem]{Lemma}
\newtheorem{proposition}[theorem]{Proposition}
\theoremstyle{definition}
\newtheorem{definition}[theorem]{Definition}
\newtheorem{remark}[theorem]{Remark}
\newtheorem*{theorem*}{Theorem}
\newtheorem*{proposition*}{Proposition}
\newtheorem*{definition*}{Definition}
\numberwithin{equation}{section}
\newenvironment{abc}{\begin{enumerate}[{\rm (a)}]}{\end{enumerate}}
\newenvironment{iiv}{\begin{enumerate}[{\rm (i)}]}{\end{enumerate}}
\def\V{\mathrm{V}}
\def\vv{\mathrm{v}}
\def\ee{\mathrm{e}}
\def\dd{\mathrm{d}}
\def\E{\mathrm{E}}
\def\W{\mathrm{W}}
\def\dom{\mathrm{D}}
\def\NN{\mathbb{N}}
\def\CC{\mathbb{C}}
\def\BB{\mathbb{B}}
\def\RR{\mathbb{R}}
\def\Ell{\mathrm{L}}
\def\diag{\mathrm{diag}}
\def\Id{\mathrm{I}}
\begin{document}
\title{Well-posedness of non-autonomous transport equation on metric graphs}

\author{Christian Budde}
\address{University of the Free State, Department of Mathematics and Applied Mathematics, P. O. Box 339, 9300 Bloemfontein, South Africa}
\email{buddecj@ufs.ac.za}

\author{Marjeta Kramar Fijav\v{z}} 
\address{University of Ljubljana, Faculty of Civil and Geodetic Engineering, Jamova 2, SI-1000 Ljubljana, Slovenia / Institute of Mathematics, Physics, and Mechanics, Jadranska 19, SI-1000 Ljubljana, Slovenia}
\email{marjeta.kramar@fgg.uni-lj.si}

\keywords{non-autonomous problems, evolution families, evolution semigroups, metric graphs, extrapolation spaces, transport equation}
\subjclass[2010]{37B55, 47D06, 35R02, 35F46}

\begin{abstract}
We consider transport processes on metric graphs with time-dependent velocities and show that, under  continuity assumption of the velocity coefficients,  the corresponding non-autonomous abstract Cauchy problem is well-posed  by means of evolution families and evolution semigroups.  
\end{abstract}

\date{\today}
\maketitle

\section{Introduction}

Consider a finite network (i.e., of pipelines)  where  some material is transported along its branches (i.e., pipes). The velocity of the transport  depends on a given branch but may also change in time. We would like to know under which condition such a system can be modelled in a way that for any given initial distribution we are able to predict the state of the system in any time. We would also like to obtain stable solutions, that continuously depend on the initial state. In this case we will call our problem well-posed.

\medskip
Such transport problems on networks have  already been studied by several authors. The operator theoretical approach by means of abstract Cauchy problems on Banach spaces was initiated by the second author and E.~Sikolya \cite{KS2005}, for an overview and further references  we refer to the survey \cite{KP2020}. However, the majority of the publications concentrates on time-independent transport and hence autonomous abstract Cauchy problems. A first attempt to non-autonomous problems of this kind was performed by F.~Bayazit et al.~\cite{BDF2013}. They considered transport on networks with boundary conditions changing in time. The advantage of such an approach is that the corresponding operator does not change its action on the Banach space only its domain changes in time. Our aim is to consider also the non-autonomous operator, that is, we study transport problems on finite metric graphs with time-dependent velocities along the edges. We use evolution families and evolution semigroups as studied by G.~Nickel \cite{N1997} and show that the abstract Cauchy problem which can be associated to the transport equation on these graphs is well-posed. 

\medskip
Let us also mention that diffusion and other processes on networks have also been studied by semigroup techniques, see e.g.~the monograph by D.~Mugnolo \cite{Delio2014}. Time-dependent diffusion on networks was considered in \cite{ADK2014} where a non-autonomous form method was used that is only applicable in Hilbert spaces.

\medskip
This paper is structured as follows. The first section consists of a  reminder on some notions from  graph theory as well as  of the definition and some results on general  non-autonomous abstract Cauchy problems and associated evolution families.  In the second section   we present our non-autonomous transport problem on a metric graph and rewrite it in an operator theoretical context. We first consider  non-autonomous operators $A(t)$ with a common time-independent domain {$\dom( A(t)) \equiv \dom$}. Acquistapace and Terreni \cite{AT1984} studied operators of this kind but their results  are not applicable in our situation since they assume the generation of analytic semigroups which is not our case. We can however apply the results by Kato   \cite{K1953}. In section \ref{nonaut} we treat the general case in which the operators $A(t)$ do not necessarily share a common domain. We will obtain the aimed for evolution family  as a composition of the translation and the multiplication semigroup, whereby the multiplication semigroup will be constructed by  the known solution semigroups of the autonomous case using the results by T.~Graser \cite{G1997}  on unbounded operator multipliers. In order to determine the domain of the generator of the evolution semigroup we will follow the approach via the extrapolation spaces as presented by R.~Nagel, G.~Nickel and S.~Romanelli \cite{NNR1996}. Here we will make use of the fact that in our case the extrapolated operators share a common extrapolation space. 

\section{Preliminaries}
\subsection{Metric graphs}  
We shall use  the notation presented for example in \cite{KS2005,BKR2017}. We take a simple,  connected, directed, finite graph $(\V,\E)$ with the set of \emph{vertices} $\V=\{\vv_1,\dots,\vv_n\}$ and the set of directed \emph{edges} $ \E = \{\ee_1,\dots,\ee_m\}\subseteq \V\times\V$.
We parametrize each edge with an interval $[0,1]$ and thus obtain a metric object called \emph{metric graph} (or network). By an abuse of notation we denote the endpoints of the edge $\ee=(\vv_i ,\vv_j)$ as $\vv_i = \ee(1)$ and $\vv_j = \ee(0)$. For technical reasons we assume that the graph is oriented contrary to the parametrisation of the edges. The structure of the graph can be described by its incidence matrices as follows. 
The  \emph{outgoing incidence matrix} $\Phi^{-}:=(\phi^{-}_{ij})_{n\times m}$  and \emph{incoming incidence matrix} $\Phi^{+}:=(\phi^{+}_{ij})_{n\times m}$  are defined as
\begin{equation*}\label{eq:incident}
 \phi^{-}_{ij} := 
 \begin{cases}
  1, & \text{if } \ee_j(1) = \vv_i,\\
  0, & \text{otherwise},
 \end{cases}
 \qquad 
\text{and}
\qquad 
 \phi^{+}_{ij} := 
 \begin{cases}
  1, & \text{if } \ee_j(0) = \vv_i,\\
  0, & \text{otherwise}.
 \end{cases}
\end{equation*} 

We will further assign some weights $0 \le  w_{ij} \le 1$ to the edges of our metric graph such that
\begin{equation}\label{eqn:noAbsorb}
\sum_{j=1}^m{w_{ij}}=1 \text{ for all } i=1,\dots, n.
\end{equation}
The so-called \emph{weighted (transposed) adjacency matrix of the line graph} $\mathbb{B} = (\mathbb{B}_{ij})_{m\times m}$ defined by 
\begin{equation}\label{eqn:adjMat}
\mathbb{B}_{ij}:=\begin{cases}
w_{ki},& \text{ if } \ee_j (0)=\vv_k =\ee_i(1),\\
0, & \text{ otherwise},
\end{cases}
\end{equation}
contains all information on our weighted metric graph. By \eqref{eqn:noAbsorb}, the matrix $\mathbb{B}$ is column stochastic, i.e., the sum of entries of each column is $1$, and defines a bounded positive operator on $\CC^m$ with $r(\mathbb{B})= \|\mathbb{B}\| =1$. Since the graph is connected, $\BB$ is an irreducible matrix. For additional terminology and properties we refer to \cite[Ch.~18]{BKR2017}.

\subsection{Non-autonomous abstract Cauchy problems}
The abstract operator-theoretical setting we will use is the setting of  so-called  non-autonomous abstract Cauchy problems. Let $(A(t),\dom(A(t)))_{t\in\RR}$ be a family of (unbounded) operators on a Banach space $X$. The associated \emph{non-autonomous abstract Cauchy problem} is given by
\begin{align}\tag{nACP}\label{eqn:nACP}
\begin{cases}
\dot{u}(t)=A(t)u(t),&\quad t\geq s,\\
u(s)=x \in X.
\end{cases}
\end{align}
The \emph{autonomous} case, i.e., when the operators $A(t)\equiv A$ do not depend on time, is well understood by means of strongly continuous one-parameter operator semigroups, or $C_0$-semigroups for short. There are several monographs on this theory, we refer to K.-J.~Engel and R.~Nagel \cite{EN}. 
To some extent this theory can be carried over to non-autonomous case, as described below. First, let us recall the notions of (classical) solutions and well-posedness of \eqref{eqn:nACP}.

\begin{definition}{\cite[Def. VI.9.1]{EN}}
Let $(A(t),\dom(A(t)))$, $t\in\RR$, be linear operators on a Banach space $X$ and take $s\in\RR$ and $x\in\dom(A(s))$. A \emph{(classical) solution of \eqref{eqn:nACP}} is a function $u(\cdot,s,x)=u\in\mathrm{C}^1\left(\left[s,\infty\right),X\right)$ such that $u(t)\in\dom(A(t))$ and $u$ satisfies \eqref{eqn:nACP} for $t\geq s$. The Cauchy problem \eqref{eqn:nACP} is called \emph{well-posed} (on spaces $Y_t$) if there are subspaces $Y_s\subseteq\dom(A(s))$, $s\in\RR$, dense in  $X$, such that for $s\in\RR$ and $x\in Y_s$ there is a unique solution $t\mapsto u(t,s,x)\in Y_t$ of \eqref{eqn:nACP}. In addition, for $s_n\to s$ and $Y_{s_n}\ni x_n\to x$, we have $\widetilde{u}(t,s_n,x_n)\to\widetilde{u}(t,s,x)$ uniformly for $t$ in compact intervals in $\RR$, where we set $\widetilde{u}(t,s,x):={u}(t,s,x)$ for $t\geq s$ and $\widetilde{u}(t,s,x):=x$ for $t<s$.
\end{definition}

In the autonomous case, the solutions are represented by $C_0$-semigroups. An appropriate analoge for the solutions of \eqref{eqn:nACP} are  so-called evolution families.

\begin{definition}{\cite[Def. VI.9.2]{EN}}
A family of bounded operators $(U(t,s))_{t,s\in\RR, t\geq s}$ on a Banach space $X$ is called a \emph{(strongly continuous) evolution family} if
\begin{iiv}
	\item $U(t,s)=U(t,r)U(r,s)$ and $U(s,s)=\Id$ for $t\geq r\geq$ s and $t,r,s\in\RR$,
	\item the mapping $\left\{(\tau,\sigma)\in\RR^2:\ \tau\geq\sigma\right\}\ni(t,s)\mapsto U(t,s)$ is strongly continuous,
	\item there exists $M\geq1$ and $\omega\in\RR$ such that $\left\|U(t,s)\right\|\leq M\ee^{\omega(t-s)}$ for all $t\geq s$.
\end{iiv}
We say that $(U(t,s))_{t\geq s}$ solves the Cauchy problem \eqref{eqn:nACP} (on space $Y_t$) if there are dense subspaces $Y_s\subseteq X$, $s\in\RR$,  such that $U(t,s)Y_s\subseteq Y_t\subseteq\dom(A(t))$ for $t\geq s$ and the function $t\mapsto U(t,s)x$ is a solution of \eqref{eqn:nACP} for $s\in\RR$ and $x\in Y_s$.
\end{definition} 

\begin{proposition}{\cite[Prop.~2.5]{N1997}}
The Cauchy problem \eqref{eqn:nACP} is well-posed (on $Y_t$) if and only if there is an evolution family solving \eqref{eqn:nACP} (on $Y_t$).
\end{proposition}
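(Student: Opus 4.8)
The statement is an equivalence, so the plan is to prove the two implications separately, in each case using \emph{uniqueness} of $Y_t$-valued solutions as the central mechanism.

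For the implication ``evolution family $\Rightarrow$ well-posed'', suppose $(U(t,s))_{t\ge s}$ solves \eqref{eqn:nACP} on the spaces $Y_t$. I would simply set $u(t,s,x):=U(t,s)x$ for $x\in Y_s$; by the definition of ``solves'' this is already a $Y_t$-valued solution, so only uniqueness and continuous dependence remain to be checked. Continuous dependence is the easy part: writing $U(t,s_n)x_n-U(t,s)x=U(t,s_n)(x_n-x)+\bigl(U(t,s_n)-U(t,s)\bigr)x$, the first term is controlled by the uniform exponential bound (iii) and the second tends to $0$ by strong continuity (ii), both uniformly for $t$ in compact intervals. For uniqueness I would fix $t\ge s$, take any $Y_t$-valued solution $v$ with $v(s)=x$, and examine the map $g\colon r\mapsto U(t,r)v(r)$ on $[s,t]$. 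Using the cocycle identity (i) to rewrite the difference quotients of $r\mapsto U(t,r)$, together with $\dot v(r)=A(r)v(r)$ and the fact that $\tau\mapsto U(\tau,r)v(r)$ is itself a solution (because $v(r)\in Y_r$), the two contributions to $g'(r)$ cancel, so $g'\equiv0$ and hence $v(t)=g(t)=g(s)=U(t,s)x$. This identifies $v$ with $U(\cdot,s)x$ and gives uniqueness.

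For the converse, assume \eqref{eqn:nACP} is well-posed on $Y_t$ and define $U(t,s)x:=u(t,s,x)$ on $Y_s$, with $U(s,s):=\Id$. Linearity of each $U(t,s)$ follows from uniqueness together with the linearity of \eqref{eqn:nACP}. The evolution (cocycle) property (i) is again a uniqueness argument: for $x\in Y_s$ and $t\ge r\ge s$ the restriction of $u(\cdot,s,x)$ to $[r,\infty)$ is a $Y_t$-valued solution taking the value $U(r,s)x\in Y_r$ at time $r$, so by uniqueness it coincides with $u(\cdot,r,U(r,s)x)=U(\cdot,r)U(r,s)x$; evaluating at $t$ yields $U(t,s)=U(t,r)U(r,s)$.

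The main obstacle is the remaining, quantitative part of the converse: upgrading the densely defined operators $U(t,s)\colon Y_s\to Y_t$ to bounded operators on all of $X$ that satisfy the strong continuity (ii) and the uniform exponential estimate (iii). The plan is to extract from the continuous-dependence hypothesis the local boundedness of the orbits $\{U(t,s)x:(t,s)\in K\}$ for compact $K\subseteq\{t\ge s\}$ and $x$ in the relevant $Y_s$, and then invoke the uniform boundedness principle to obtain $\sup_{(t,s)\in K}\|U(t,s)\|<\infty$; this bound both permits the extension of each $U(t,s)$ to a bounded operator on $X$ by density and supplies the local boundedness underlying (ii), while propagating it along the cocycle relation (i) produces the exponential-type growth (iii). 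The delicate point throughout is that the domains $Y_s$ vary with $s$, so the uniform boundedness argument must be arranged on a fixed space rather than on the moving subspaces; this is exactly the technical heart of \cite[Prop.~2.5]{N1997}, and where I would expect to spend most of the effort.
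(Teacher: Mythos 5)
The paper gives no proof of this proposition at all---it is quoted from \cite[Prop.~2.5]{N1997}---so your attempt can only be compared with Nickel's standard argument. Your implication ``evolution family $\Rightarrow$ well-posed'' is exactly that argument, correctly reproduced: existence holds by the definition of ``solves'', continuous dependence follows from (ii) and (iii), and your uniqueness proof via $g(r):=U(t,r)v(r)$---using the cocycle identity to turn the increment of $U(t,\cdot)$ into the increment $U(r+h,r)v(r)-v(r)$ of a genuine solution, which is differentiable precisely because $v(r)\in Y_r$---is the classical one (one should add that a continuous function with vanishing right derivative is constant). Likewise, in the converse direction, defining $U(t,s)x:=u(t,s,x)$ and deriving linearity and the cocycle law from uniqueness is correct.

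The genuine gap lies in the quantitative half of the converse, and it is twofold. First, the uniform boundedness principle cannot play the role you assign to it: Banach--Steinhaus presupposes a family of \emph{bounded} operators on a fixed Banach space that is pointwise bounded on \emph{all} of that space, whereas at this stage each $U(t,s)$ is only densely defined on the moving subspace $Y_s$, its boundedness is precisely what must be proved, and for a fixed $x\in X$ nothing controls $\{U(t,s)x:(t,s)\in K\}$ since in general $x\notin Y_s$; as written, your plan uses the bound produced by UBP to construct the very operators to which UBP is applied. The working mechanism is the continuous-dependence axiom itself, used twice: with $s_n\equiv s$ it gives continuity at $0$ of the linear map $Y_s\ni x\mapsto u(t,s,x)$, hence boundedness of each $U(t,s)$ and its extension to $X$ by density; and local uniform boundedness follows by contradiction---if there were $(t_n,s_n)\to(t_0,s_0)$ in a compact $K$ and unit vectors $x_n\in Y_{s_n}$ with $\left\|U(t_n,s_n)x_n\right\|\geq n^2$, then $Y_{s_n}\ni y_n:=x_n/n\to0$ would contradict $\widetilde{u}(\cdot,s_n,y_n)\to0$ uniformly on compacts. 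No Baire-category argument is needed, nor is one usable. Second, and unrepairably, the exponential bound (iii) does \emph{not} follow by ``propagating along the cocycle'': compactness yields bounds uniform for $(t,s)$ in compact sets, never uniform in $s\in\RR$. In fact (iii) fails in general: for $X=\CC$ and $A(t)x=tx$ the problem is well-posed with $Y_s=\CC$, and uniqueness forces $U(t,s)=\ee^{(t^2-s^2)/2}$, which is not exponentially bounded. So the equivalence is provable only when (iii) is weakened to local uniform boundedness; with the global bound (iii) built into the definition of evolution family, as this paper does, the ``only if'' direction is strictly speaking false. That last point is a defect in the transcription of the cited result rather than something any proof could overcome (and it is harmless for the paper, whose evolution families are contractive), but your claim to derive (iii) is a step that would fail.
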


A uniform generation theorem in the style of Hille--Yosida is unfortunately not known for non-autonomous Cauchy problems. In fact, the existence of solutions of \eqref{eqn:nACP} is a priori not clear. However, there are several attempts to the characterisation the  well-posedness of certain classes of non-autonomous abstract Cauchy problems, let us only mention the work by P.~Acquistapace and B.~Terreni \cite{AT1987}, T.~Kato and H.~Tanabe \cite{KT1962}, as well as the survey by R.~Schnaubelt \cite{S2002}. There is one possible approach to the  well-posedness results that applies operator semigroup theory  by means of the so-called evolution semigroups, as follows. To any evolution family $(U(t,s))_{t\geq s}$ on a Banach space $X$ one can associate an operator semigroup $(\mathcal{T}(t))_{t\geq0}$ on the space $\mathrm{C}_0(\RR,X)$ by
\[
(\mathcal{T}(t)f)(s):= U(s,s-t)f(s-t),\quad t\geq0,\ f\in\mathrm{C}_0(\RR,X),\ s\in\RR.
\]
By \cite[Lemma~VI.9.10]{EN} this semigroup is strongly continuous on $\mathrm{C}_0(\RR,X)$ and is called the \emph{evolution semigroup}. We denote its generator by $(G,\dom(G))$. The following result characterizes evolution semigroups. By $(T_\mathrm{r}(t))_{t\ge 0}$ we denote the right translation semigroup on $\mathrm{C}_{\mathrm{c}}(\RR)$ defined by $(T_\mathrm{r}(t)\varphi )(s):= \varphi(s-t)$.

\begin{theorem}\label{thm:CharEvoSemi}{ \cite[Thm.~VI.9.14]{EN}}
Let $(T(t))_{t\geq0}$ be a $C_0$-semigroup on $\mathrm{C}_0(\RR,X)$ with generator $(G,\dom(G))$. Then following assertions are equivalent.
\begin{abc}
	\item $(T(t))_{t\geq0}$ is an evolution semigroup.
	\item $T(t)(\varphi f)=(T_\mathrm{r}(t)\varphi)T(t)f$ for all $\varphi\in\mathrm{C}_{\mathrm{c}}(\RR)$, $f\in\mathrm{C}_0(\RR,X)$ and $t\geq0$.
	\item For $f\in\dom(G)$ and $\varphi\in\mathrm{C}_{\mathrm{c}}^1(\RR)$ we have $\varphi f\in\dom(G)$ and $G(\varphi f)=\varphi Gf-\varphi'f$.
\end{abc}
\end{theorem}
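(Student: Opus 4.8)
The plan is to establish the two equivalences $(a)\Leftrightarrow(b)$ and $(b)\Leftrightarrow(c)$, where in each case one direction is a short computation and the converse carries the content. The implication $(a)\Rightarrow(b)$ is immediate: if $T(t)$ comes from an evolution family $(U(t,s))$, then for $\varphi\in\mathrm{C}_{\mathrm{c}}(\RR)$ and $f\in\mathrm{C}_0(\RR,X)$,
\[
(T(t)(\varphi f))(s)=U(s,s-t)\big(\varphi(s-t)f(s-t)\big)=\varphi(s-t)\,U(s,s-t)f(s-t)=\big((T_{\mathrm{r}}(t)\varphi)\,T(t)f\big)(s),
\]
using linearity of $U(s,s-t)$ and that $\varphi(s-t)$ is a scalar; this is exactly $(b)$.

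For $(b)\Rightarrow(c)$, I would insert $(b)$ into the difference quotient defining $G(\varphi f)$ for $f\in\dom(G)$, $\varphi\in\mathrm{C}_{\mathrm{c}}^1(\RR)$, and split
\[
\tfrac1t\big(T(t)(\varphi f)-\varphi f\big)=(T_{\mathrm{r}}(t)\varphi)\,\tfrac{T(t)f-f}{t}+\tfrac{T_{\mathrm{r}}(t)\varphi-\varphi}{t}\,f.
\]
As $t\downarrow0$ the first summand tends to $\varphi\,Gf$ (from $T_{\mathrm{r}}(t)\varphi\to\varphi$ uniformly, $\tfrac{T(t)f-f}{t}\to Gf$, and continuity of the pointwise multiplication), while the second tends to $-\varphi'f$ since $\varphi$ lies in the domain of the generator $-\tfrac{\dd}{\dd s}$ of $(T_{\mathrm{r}}(t))$. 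Hence the limit exists, giving $\varphi f\in\dom(G)$ and $G(\varphi f)=\varphi Gf-\varphi'f$.

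The converse $(c)\Rightarrow(b)$ I would prove by comparing, for fixed $f\in\dom(G)$ and $\varphi\in\mathrm{C}_{\mathrm{c}}^1(\RR)$, the two curves $u(t)=T(t)(\varphi f)$ and $v(t)=(T_{\mathrm{r}}(t)\varphi)\,T(t)f$ in $\mathrm{C}_0(\RR,X)$, both starting at $\varphi f$. Since $\varphi f\in\dom(G)$ by $(c)$, the curve $u$ solves $u'=Gu$. For $v$, note $T(t)f\in\dom(G)$ and $T_{\mathrm{r}}(t)\varphi\in\mathrm{C}_{\mathrm{c}}^1(\RR)$, so $(c)$ applies and yields $v(t)\in\dom(G)$ with $Gv(t)=(T_{\mathrm{r}}(t)\varphi)T(t)Gf-(T_{\mathrm{r}}(t)\varphi')T(t)f$; on the other hand the Leibniz rule for the product of the differentiable scalar curve $t\mapsto T_{\mathrm{r}}(t)\varphi$ (with derivative $-T_{\mathrm{r}}(t)\varphi'$) and the vector curve $t\mapsto T(t)f$ gives the same expression for $v'(t)$. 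Thus $v'=Gv$ with $v(0)=\varphi f$, and uniqueness of solutions of the abstract Cauchy problem for the generator $G$ forces $u=v$, i.e.\ $(b)$ on $\dom(G)\times\mathrm{C}_{\mathrm{c}}^1(\RR)$; since both sides of $(b)$ depend continuously on $f$ and on $\varphi$, a density argument extends it to all $f\in\mathrm{C}_0(\RR,X)$ and $\varphi\in\mathrm{C}_{\mathrm{c}}(\RR)$.

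The main obstacle is $(b)\Rightarrow(a)$, the reconstruction of the evolution family, whose heart is a \emph{locality} property. From $(b)$, choosing $\varphi\in\mathrm{C}_{\mathrm{c}}(\RR)$ with $\varphi(s-t)=1$ gives $(T(t)f)(s)=(T(t)(\varphi f))(s)$, hence $\|(T(t)f)(s)\|\le\|T(t)\|\sup_{\mathrm{supp}\,\varphi}\|f\|$; shrinking $\mathrm{supp}\,\varphi$ around $s-t$ shows $(T(t)f)(s)$ depends only on the value $f(s-t)$. This lets me define $U(\sigma,\sigma-t)\in\mathcal{L}(X)$ by $U(\sigma,\sigma-t)x:=(T(t)f)(\sigma)$ for any $f$ with $f(\sigma-t)=x$, with $\|U(\sigma,\sigma-t)\|\le\|T(t)\|$. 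I would then check the evolution family axioms: $U(s,s)=\Id$ from $T(0)=\Id$; the cocycle law $U(t,r)U(r,s)=U(t,s)$ directly from $T(t-s)=T(t-r)T(r-s)$; the exponential bound from that of $(T(t))$; and joint strong continuity by localizing $f$ to equal $x$ near the relevant point and combining continuity of $\tau\mapsto T(\tau)f$ with joint continuity of evaluation. By construction the associated evolution semigroup $(s\mapsto U(s,s-t)f(s-t))$ then coincides with $T(t)$, so $(T(t))$ is an evolution semigroup. I expect the delicate points to be precisely the well-definedness of $U$ via locality and the verification of joint strong continuity.
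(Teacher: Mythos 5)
The paper gives no proof of this statement---it is quoted directly from \cite[Chapter VI, Thm.~9.14]{EN}---so the only meaningful comparison is with the argument in that reference. Your proposal is correct and is essentially that standard argument: the locality trick (shrinking $\mathrm{supp}\,\varphi$ around $s-t$ to show $(T(t)f)(s)$ depends only on $f(s-t)$) to reconstruct the evolution family for $(b)\Rightarrow(a)$, the difference-quotient computation for $(b)\Rightarrow(c)$, and the comparison of two solutions of the abstract Cauchy problem for $G$ (equivalently, differentiating $r\mapsto T(t-r)\bigl((T_{\mathrm{r}}(r)\varphi)\,T(r)f\bigr)$) plus density for $(c)\Rightarrow(b)$.
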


Finally, one can characterize the well-posedness of \eqref{eqn:nACP} as follows.

\begin{theorem}\label{thm:WPEvoSemi}{\cite[Thm.~2.9]{N1997}}
Let $(A(t),\dom(A(t)))_{t\in\RR}$ be a family of linear operators on a Banach space $X$. The following are equivalent.
\begin{abc}
	\item \eqref{eqn:nACP} is well-posed for the family of operators $(A(t),\dom(A(t)))_{t\in\RR}$.
	\item There exists a unique evolution semigroup $(T(t))_{t\geq0}$ with generator $(G,\dom(G))$ and a $T(t)$-invariant core $D\subseteq\mathrm{C}_0^1(\RR,X)\cap\dom(G)$ such that $Gf+f'=A(\cdot)f$ for all $f\in D$.
\end{abc}
\end{theorem}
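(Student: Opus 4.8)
The plan is to prove the equivalence by exploiting the one-to-one correspondence between evolution families and evolution semigroups, which is the content of the definition of an evolution semigroup together with the characterisation in Theorem \ref{thm:CharEvoSemi}. The bridge between the two assertions is the identity $Gf = A(\cdot)f - f'$, which one should read as the infinitesimal form of the two propagation equations of the evolution family: the forward equation $\tfrac{\dd}{\dd t}U(t,s)x = A(t)U(t,s)x$ and the backward equation $\tfrac{\dd}{\dd s}U(t,s)x = -U(t,s)A(s)x$, the latter being obtained from the former and the cocycle law $U(t,s)=U(t,r)U(r,s)$ by differentiating in $r$ and letting $r\to s$.

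For (a) $\Rightarrow$ (b) I would first apply \cite[Prop.~2.5]{N1997} to obtain an evolution family $(U(t,s))_{t\ge s}$ solving \eqref{eqn:nACP} on the spaces $Y_t$, and form its evolution semigroup $(T(t))_{t\ge 0}$ with generator $(G,\dom(G))$; its uniqueness is immediate, since an evolution semigroup determines, and is determined by, its evolution family. As a candidate core I would take
\[
D:=\{\,f\in\mathrm{C}_0^1(\RR,X): f(s)\in Y_s \text{ for all } s\in\RR,\ A(\cdot)f(\cdot)\in\mathrm{C}_0(\RR,X)\,\}.
\]
Differentiating $(T(t)f)(s)=U(s,s-t)f(s-t)$ at $t=0$ by the chain rule and inserting the backward equation yields, pointwise, $(Gf)(s)=A(s)f(s)-f'(s)$; the substantive part is to check that this difference quotient converges in the sup-norm of $\mathrm{C}_0(\RR,X)$, so that $f\in\dom(G)$ and the relation $Gf+f'=A(\cdot)f$ holds genuinely. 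Invariance $T(t)D\subseteq D$ then follows because $U(s,s-t)$ maps $Y_{s-t}$ into $Y_s$ and preserves the required regularity.

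For (b) $\Rightarrow$ (a) I would reverse the construction. Since $(T(t))_{t\ge 0}$ is by hypothesis an evolution semigroup, it is associated to a unique evolution family $(U(t,s))_{t\ge s}$, which one recovers from $T$. Setting $Y_s:=\{f(s):f\in D\}$, the relation $Gf+f'=A(\cdot)f$ forces $Y_s\subseteq\dom(A(s))$, while $Y_s$ is dense because $D$ is a core, and the invariance of $D$ transfers to $U(t,s)Y_s\subseteq Y_t$. It then remains to verify that for $x=f(s)\in Y_s$ the orbit $t\mapsto U(t,s)x$ solves \eqref{eqn:nACP}: writing it as the diagonal evaluation $U(t,s)f(s)=(T(t-s)f)(t)$ and differentiating---now legitimately, since $f\in\mathrm{C}_0^1(\RR,X)\cap\dom(G)$ lets one split the $t$-derivative into the semigroup part $T(t-s)Gf$ and the spatial part $f'$---produces $\tfrac{\dd}{\dd t}U(t,s)x = A(t)U(t,s)x$ after using the relation. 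Well-posedness then follows from \cite[Prop.~2.5]{N1997}.

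The main obstacle, in both directions, is the interplay between pointwise (in $s$) and uniform (sup-norm) statements: proving that $D$ is genuinely a core for $G$ in the graph norm, and that the formal derivatives above exist as limits in $\mathrm{C}_0(\RR,X)$ rather than merely pointwise. In (b) $\Rightarrow$ (a) the delicate point is the differentiation of the diagonal evaluation $(T(t-s)f)(t)$, where the generator time-derivative and the spatial derivative $f'$ must be combined into a single limit; this is exactly the step for which the assumption $D\subseteq\mathrm{C}_0^1(\RR,X)\cap\dom(G)$ is tailored.
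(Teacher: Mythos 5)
Note first that the paper itself offers no proof of this statement: it is quoted verbatim from \cite[Thm.~2.9]{N1997}, so your attempt has to be measured against Nickel's proof. Your direction (b)$\Rightarrow$(a) follows that proof in all essentials and is sound: recover the evolution family $(U(t,s))_{t\geq s}$ from the evolution semigroup, set $Y_s:=\{f(s):f\in D\}$, get density of $Y_s$ from density of $D$, get invariance $U(t,s)Y_s\subseteq Y_t$ from $T(t)D\subseteq D$ via the identity $U(t,s)f(s)=(T(t-s)f)(t)$, differentiate that identity by splitting the increment into the $\dom(G)$-part and the $\mathrm{C}_0^1$-part, and conclude well-posedness from \cite[Prop.~2.5]{N1997}.

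The gap is in (a)$\Rightarrow$(b), and it sits exactly at the point you flag as ``substantive'' but then pass over. Your computation of $(Gf)(s)=A(s)f(s)-f'(s)$ on the core $D=\{f\in\mathrm{C}_0^1(\RR,X):\ f(s)\in Y_s\ \forall s,\ A(\cdot)f(\cdot)\in\mathrm{C}_0(\RR,X)\}$ rests on the backward equation $\frac{\dd}{\dd s}U(t,s)x=-U(t,s)A(s)x$, and this is not available: well-posedness of \eqref{eqn:nACP} yields only the forward equation. Your proposed derivation of the backward equation --- differentiate $r\mapsto U(t,r)U(r,s)x$ and let $r\to s$ --- is circular, since the product rule already presupposes differentiability of $r\mapsto U(t,r)y$, which is precisely what is to be proved. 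Concretely, the left difference quotient $\frac{1}{h}\bigl(U(s,s-h)y-y\bigr)$ concerns a solution with \emph{initial time} $s-h$; well-posedness guarantees $y=f(s)\in Y_s$, not $y\in Y_{s-h}$, and even after replacing $y$ by $f(s-h)\in Y_{s-h}$ one is left with $\frac{1}{h}\int_{s-h}^{s}A(\tau)U(\tau,s-h)f(s-h)\,\dd\tau$, whose convergence to $A(s)f(s)$ requires joint continuity of $(\tau,\sigma)\mapsto A(\tau)U(\tau,\sigma)f(\sigma)$ --- a statement about \emph{derivatives} of solutions that the definition of well-posedness (which controls solutions only) does not deliver. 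The same missing regularity undermines your claim that $T(t)D\subseteq D$, since it needs $s\mapsto U(s,s-t)f(s-t)$ to be $\mathrm{C}^1$. Nickel's proof avoids all of this by choosing a different core: the linear span of \emph{localized orbits} $f=\varphi\cdot U(\cdot,\sigma)x$ with $x\in Y_\sigma$ and $\varphi\in\mathrm{C}_{\mathrm{c}}^1(\RR)$ supported in $(\sigma,\infty)$. For such $f$ the cocycle law gives $(T(t)f)(s)=U(s,s-t)\varphi(s-t)U(s-t,\sigma)x=\varphi(s-t)\,U(s,\sigma)x$, i.e.\ the evolution semigroup acts by translating $\varphi$ only; hence the difference quotient converges in sup-norm to $-\varphi'\,U(\cdot,\sigma)x$ with no differentiation of $U$ in its second variable, the relation $Gf=-f'+A(\cdot)f$ follows from the product rule plus the forward equation alone, invariance of the span is immediate because $T_{\mathrm{r}}(t)\varphi$ is again supported in $(\sigma,\infty)$, and density (hence the core property, by invariance) follows from strong continuity of $U$ near the diagonal together with a partition of unity. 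This localization is the key idea your plan lacks; without it, (a)$\Rightarrow$(b) does not close. (A minor further point: the uniqueness of the evolution semigroup in (b) is not ``immediate''; it follows only once both implications are known, since any semigroup satisfying (b) yields an evolution family solving \eqref{eqn:nACP}, and uniqueness of solutions then forces the families, hence the semigroups, to coincide.)
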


The above result shows that determining the domain of the generator of the evolution semigroup is the most important step on the way to solve a given non-autonomous abstract Cauchy problem.

\section{The non-autonomous network transport problem}
\subsection{The setting}
Let us now consider the following non-autonomous dynamical system taking place along $m$ edges of a metric graph. 
\begin{align}\label{eqn:nF}
\tag{nF}
\begin{cases}
\frac{\partial}{\partial t}u_j(x,t)=c_j(t)\frac{\partial}{\partial x}u_j(x,t),&\quad x\in\left(0,1\right),\ t\geq s,\\
u_j(x,s)= f_j(x),&\quad x\in\left(0,1\right),\\ 
\phi^-_{ij}c_j(t)u_j(1,t)= w_{ij}\sum_{k=1}^m{\phi^+_{ik}c_k(t)u_k(0,t)},&\quad t\geq s,
\end{cases}
\end{align}
for $i=1,\dots,n$, $j=1,\dots, m$.
The first equation models the transport along the edge $\ee_j$ where $c_j(t)$ is the time-variable velocity coefficient along this edge. {In what follows, we will assume that $c_j(t)>0$ and that there exist {$m>0$ and $M>0$ }such that $m\leq c_j(t)\leq M$ for all $j=1,\ldots,m$ and all $t\in\RR$.} The second equation gives the initial mass distribution along the edges while the third equation represents the boundary conditions  in the vertices of the graph. Here, the graph structure is encoded in terms of incidence matrices. Observe, that the weights $w_{ij}$ give proportions of the material arriving in the vertex $\vv_i$ that is distributed to the edge $\ee_j$. By \eqref{eqn:noAbsorb}, at all times the mass is conserved, i.e., the Kirchhoff law holds in all vertices.

\medskip
We will show that there exists a solution of the non-autonomous set of equations \eqref{eqn:nF} by means of evolution families. The equivalent problem in the autonomous case, i.e., when all $c_j(t)\equiv c_j$ are independent of time has already been considered by several authors, cf.~\cite{KS2005,KP2020}. A first approach to time-dependent transport equations on networks was done by F.~Bayazit, B.~Dorn and M.~Kramar Fijav\v{z} \cite{BDF2013}.
In particular, they studied transport processes with time-dependent weights $\omega_{ij}(t)$ but constant velocities $c_j(t)\equiv 1$. This can be interpreted as autonomous transport with time-depending boundary conditions (i.e., the structure of the graph changes in time). They were able to obtain well-posedness as well as some results on the asymptotic behaviour. In our case, already the transport equation on each edge is non-autonomous and a different approach is needed.

\medskip
We use operator theoretical approach and first define  Banach space 
\[
X:=\Ell^1\left(\left[0,1\right],\CC^m\right)\cong\Ell^1\left(\left[0,1\right],\CC\right)^m,
\]
with the usual norm
\[
\left\|f\right\|_X:=\sum_{j=1}^m\int_0^1{\left|f_j(t)\right|\ \dd{t}}.
\]
On this Banach space $X$ we consider the  family of operators $(A(t),\dom(A(t)))$ defined by
\begin{align}\label{eqn:DiffOp}
A(t)&:=\begin{pmatrix}
c_1(t)\frac{\dd}{\dd{x}}& &0\\
& \ddots &\\
0& & c_m(t)\frac{\dd}{\dd{x}}
\end{pmatrix},\\
\dom(A(t))&:=\left\{u\in \W^{1,1}\left(\left[0,1\right],\mathbb{C}^m\right):\ u(1)=\mathbb{B}_C(t)u(0)\right\}
\label{eqn:DiffOp-D}
\end{align}
where the matrix $\BB_C(t)$ is defined by $\BB_C(t):=C(t)^{-1}\BB C(t)$ for $C(t):=\diag(c_j(t))_j$ and $\BB$ is the adjacency matrix given in \eqref{eqn:adjMat}. {We emphasize that $C(t)^{-1}$ exists for all $t\in\RR$ as we assumed that there exist {$m>0$ and $M>0$} such that $m\leq c_j(t)\leq M$ for all $j=1,\ldots,m$ and all $t\in\RR$.} We can now rewrite  our transport problem \eqref{eqn:nF} in the form of an abstract Cauchy problem as follows.    

\begin{lemma}
The abstract non-autonomous Cauchy problem  
\begin{align}\label{nACP-F}
\begin{cases}
\dot{u}(t)=A(t)u(t),&\quad t,s\in\RR,\ t\geq s,\\
u(s)=(f_j)_{j=1}^m,
\end{cases}
\end{align}
associated to the family of operators $(A(t),\dom(A(t)))$ defined in \eqref{eqn:DiffOp}-\eqref{eqn:DiffOp-D} is equivalent to the transport problem stated in \eqref{eqn:nF}.
\end{lemma}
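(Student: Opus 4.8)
The plan is to identify the abstract solution $u\colon[s,\infty)\to X$ with the tuple $(u_j(\cdot,t))_{j=1}^m$ of edge-wise densities and then to match, one by one, the three lines of \eqref{eqn:nF} with the ingredients of \eqref{nACP-F}: the action of $A(t)$, the initial value, and the requirement $u(t)\in\dom(A(t))$ that is part of being a classical solution. Under the isometry $X=\Ell^1([0,1],\CC^m)\cong\Ell^1([0,1],\CC)^m$, an element $u(t)\in X$ is exactly such a tuple, and for $u\in\mathrm{C}^1([s,\infty),X)$ the $X$-valued derivative $\dot u(t)$ is given componentwise by $\partial_t u_j(\cdot,t)$. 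Since $A(t)$ is diagonal, $\dot u(t)=A(t)u(t)$ then reads componentwise as $\partial_t u_j(x,t)=c_j(t)\,\partial_x u_j(x,t)$, i.e.\ the first line of \eqref{eqn:nF}, while $u(s)=(f_j)_{j=1}^m$ is literally the second. This part is routine once the Bochner time-derivative is identified with the pointwise one.

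The substantive step is the boundary condition, which enters through membership in $\dom(A(t))$. For a classical solution $v:=u(\cdot,t)\in\W^{1,1}([0,1],\CC^m)$ must satisfy $v(1)=\BB_C(t)v(0)$ with $\BB_C(t)=C(t)^{-1}\BB C(t)$ and $C(t)=\diag(c_j(t))_{j=1}^m$. I would write this $m$-dimensional identity componentwise, using that $C(t)$ is diagonal so that $(\BB_C(t))_{ij}=c_i(t)^{-1}\BB_{ij}\,c_j(t)$, and then multiply the $i$-th equation by $c_i(t)$ to obtain
\begin{equation*}
c_i(t)\,v_i(1)=\sum_{j=1}^m\BB_{ij}\,c_j(t)\,v_j(0),\qquad i=1,\dots,m.
\end{equation*}
By \eqref{eqn:adjMat}, $\BB_{ij}=w_{pi}$ precisely when $\ee_i(1)=\vv_p=\ee_j(0)$ and vanishes otherwise. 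Fixing the edge $i$ and letting $\vv_p=\ee_i(1)$ be its unique terminal vertex, the right-hand sum collapses to $w_{pi}\sum_{j}\phi^+_{pj}\,c_j(t)\,v_j(0)$, which is exactly the right-hand side of the third line of \eqref{eqn:nF} at vertex $p$ and edge $i$; since $\phi^-_{pi}=1$, the left-hand side $c_i(t)v_i(1)$ matches as well.

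It then remains to reconcile the counting: \eqref{eqn:nF} imposes a relation for every pair $(i,j)$ with $i=1,\dots,n$ and $j=1,\dots,m$, whereas the domain condition yields only one scalar equation per edge. The reconciliation — and the point I expect to need the most care — is that the weights vanish off the flow-outgoing edges, $w_{ij}=0$ whenever $\phi^-_{ij}=0$; this is precisely the content of the column-stochasticity of $\BB$ recorded in the Preliminaries. Consequently every pair with $\phi^-_{ij}=0$ reduces the third line of \eqref{eqn:nF} to the vacuous $0=0$, so the only nontrivial relations are those with $\phi^-_{ij}=1$, and these are exactly the $m$ equations produced above (one per edge, indexed by its terminal vertex). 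Reversing the computation — dividing by the nonzero $c_i(t)$ and repackaging the scalar relations into $v(1)=\BB_C(t)v(0)$ — gives the converse, so $u$ solves \eqref{nACP-F} if and only if $(u_j)_{j=1}^m$ solves \eqref{eqn:nF}. The whole difficulty is thus bookkeeping: faithfully tracking the interplay between the edge-indexed matrix $\BB$, the vertex-edge-indexed incidence matrices $\Phi^\pm$, and the orientation-reversal convention, so that conjugation by $C(t)$ reproduces exactly the velocity-weighted Kirchhoff fluxes of \eqref{eqn:nF}.
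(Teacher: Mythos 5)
Your proof is correct and takes essentially the same route as the paper: the paper's proof consists of exactly the reduction you identify---only the domain condition $v(1)=\BB_C(t)v(0)$ needs to be matched against the vertex conditions of \eqref{eqn:nF}---and then delegates the componentwise computation to \cite[Prop.~18.2]{BKR2017}, which is what you carry out explicitly. Your reconciliation of the counting, via the convention $w_{ij}=0$ whenever $\phi^-_{ij}=0$ (which the paper leaves implicit but needs anyway for its assertion that \eqref{eqn:noAbsorb} makes $\BB$ column stochastic), is precisely the bookkeeping that the citation hides.
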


\begin{proof}
We only need to check that the condition in the domains $\dom(A(t))$ is equivalent to the boundary condition of \eqref{eqn:nF}. 
This can be done the same way as  in the proof of \cite[Prop.~18.2]{BKR2017}.
\end{proof}

Our main goal is to show, that problem \eqref{nACP-F} is well-posed. We start with the simplest  situation.

\subsection{Operators with constant domains}\label{sec:ConstDom}

 In the literature treating non-autonomous Cauchy problems it is often assumed that the domains of the operators $A(t)$  appearing in \eqref{nACP-F}

do not depend on $t$.  In this case, the smoothness of the coefficients $c_j(\cdot)$ yields the following well-posedness result.
 
 \begin{proposition}
Let $(A(t),\dom(A(t)))$ be the operator family defined in \eqref{eqn:DiffOp}-\eqref{eqn:DiffOp-D}. Assume that $\dom(A(t)) = \dom(A(0)) =:D$ for all $t\in\RR$ and that $C(\cdot)\in\mathrm{C}^1(\RR,\RR^m)$. Then  \eqref{eqn:nF}
is a well-posed problem.
\end{proposition}
 
\begin{proof}
This follows from the well-known result by Kato once we verify the assumptions of \cite[Thm.~5]{K1953}. 
First, note that by \cite[Cor.~18.15]{BKR2017}, for each fixed $t\in\RR$, operator $(A(t),D)$ generates a contractive $C_0$-semigroup.
Next, the smoothness of coefficients $c_j(\cdot)$ implies, that the mapping $t\mapsto A(t)f$ is continuously differentiable for each $f\in D$.  Finally, the latter condition is  known  to be equivalent to the Kato's assumptions,  see \cite[Sect.~VI.9.5]{EN} or \cite[Prop.~2.1]{SG2014}.
\end{proof} 
 
Let us state some simple conditions that yield $t$-independence of the domains $\dom(A(t))$.

\begin{lemma}\label{lemma:ConstDom}
The following two properties of the coefficients $c_j(\cdot)$ appearing in  \eqref{eqn:DiffOp}-\eqref{eqn:DiffOp-D} are equivalent.
\begin{enumerate}[(i)]
\item Whenever $\mathbb{B}_{ij}\neq 0 $,
$c_i(t_1) c_j(t_1)^{-1} = c_i(t_2) c_j(t_2)^{-1}$ for any  $t_1,t_2 \in\RR$.
\item  $C(t)=\alpha(t) D$, $t\in\RR$, for some scalar function $\alpha(\cdot)$ and diagonal matrix $D$.
\end{enumerate}
Moreover, each of these properties implies  that $\dom(A(t_1)) = \dom(A(t_2))$ for any  $t_1,t_2 \in\RR$.
\end{lemma}

\begin{proof}
It is easy to see that (i) implies (ii). Indeed, by taking $t_1=t$ and $t_2=0$, and denoting $D=\diag(d_i)_i:=C(0)$, we can reformulate (i) as $c_i(t) d_i^{-1}=c_j(t) d_j^{-1}=:\alpha (t)$, which yields  (ii).
We further observe that $\left(\BB_C(t)\right)_{ij} = c_i^{-1}(t) \BB_{ij} c_j(t)$. Hence, (ii)  clearly implies $\BB_C(t_1) = \BB_C(t_2)$ and thus also (i). 
\end{proof}

One can interpret condition (i) in the above Lemma as follows: whenever there is an inflow into the $i$-th edge from the $j$-th edge, at all times the velocities of the flow should stay in the same ratio which is, for example,  determined by the radii of the respective pipelines. This is reasonable in many situations. However, our main result will show that neither constant domains nor the  $C^1$-condition on the coefficients $c_j(\cdot)$ is necessary for the well-posedness of  \eqref{eqn:nF}.

\section{Well-posedness of  the general non-autonomous problem}\label{nonaut}

{Let us now consider the general situation when the domains of operators $A(t)$  are not necessarily constant. We start by observing that the domains of the adjoint operators $A'(t)$, however, do not depend on $t$.
\begin{lemma}\label{lem:adj}
The adjoints of the operators  $(A(t),\dom(A(t)))$ defined in \eqref{eqn:DiffOp}-\eqref{eqn:DiffOp-D} are given  by
\begin{align*}\
A'(t)&:=\begin{pmatrix}
- c_1(t)\frac{\dd}{\dd{x}}& &0\\
& \ddots &\\
0& &- c_m(t)\frac{\dd}{\dd{x}}
\end{pmatrix},\\
\dom(A'(t))&:=\left\{v\in \W^{1,\infty}\left(\left[0,1\right],\mathbb{C}^m\right):\ v(0)=\mathbb{B}^{\top}v(1)\right\}.
\label{eqn:DiffOp-DAdj}
\end{align*}
\end{lemma}
\begin{proof}
For $u\in \dom(A(t))$ and $v\in \W^{1,\infty}\left(\left[0,1\right],\mathbb{C}^m\right)$ we first compute
 \begin{align*}
 \langle A(t)  u,v\rangle &= \sum_{k=1}^m \int_0^1 c_k(t) u'_k(x) v_k(x) \; dx \\
 &= \sum_{k=1}^m  c_k(t) \left(u_k(1) v_k(1) -  u_k(0) v_k(0) \right) -\sum_{k=1}^m \int_0^1 c_k(t) u_k(x) v'_k(x) \; dx.
 \end{align*}
 Now we observe that $v \in \dom(A'(t))$ if and only if 
 \begin{align*}
 0 &=   \sum_{k=1}^m  c_k(t) \left(u_k(1) v_k(1) -  u_k(0) v_k(0) \right) \\
 &=\sum_{k=1}^m  c_k(t)  \left( \sum_{i=1}^m \left(\left(\BB_C(t)\right)_{ki} u_i(0) \right)v_k(1) -  u_k(0) v_k(0) \right)  \\
  &=  \sum_{i=1}^m \left( \sum_{k=1}^m  \left(\BB_C(t)^{\top}\right)_{ik} c_k(t)  v_k(1) \right) u_i(0)  -  \sum_{k=1}^m c_k(t) v_k(0) u_k(0)
  \end{align*}
  for all  $u\in \dom(A(t))$. By using $\BB_C(t) ^{\top} C(t) = C(t) \BB^{\top} $ we  obtain that this is further equivalent to 
\[C(t) \BB^{\top}  v(1) - C(t) v(0)=0\]
and since $C(t)$ is invertible, we are done.
\end{proof}
}

We {shall now employ} evolution families and evolution semigroups approach to show the well-posedness of \eqref{eqn:nACP} 
for operators $A(t)$ given by \eqref{eqn:DiffOp} with non-autonomous domains of the form \eqref{eqn:DiffOp-D}.{We proceed in several steps.}

\subsection{The associated multiplication semigroup.}

By \cite[Cor.~18.15]{BKR2017}, for each fixed $s\in\RR$, the abstract Cauchy problem corresponding to the single operator $(A(s),\dom(A(s)))$ is well-posed {on $X$} and the solution is given by a positive contraction semigroup $(T_s(t))_{t\geq0}$. From this we construct a new operator semigroup $(\mathcal{S}(t))_{t\geq0}$ on the vector-valued function space $\mathrm{C}_0(\RR,X)$ by 
\begin{equation}\label{eq:mult-sgr}
(\mathcal{S}(t)f)(s):=T_s(t)f(s),\quad t\geq0,\ f\in\mathrm{C}_0(\RR,X),\ s\in\RR.
\end{equation}
We will show that $(\mathcal{S}(t))_{t\geq0}$ is  a strongly continuous semigroup on  $\mathrm{C}_0(\RR,X)$ and give its generator.

\begin{proposition}\label{prop:MultiSemi}
Assume that $c_j(\cdot)\in\mathrm{C}(\RR)$ and that there exist {$m>0$ and $M>0$} such that $m\leq c_j(t)\leq M$ for each $t\in\RR$. Then, the operator semigroup $(\mathcal{S}(t))_{t\geq0}$ is strongly continuous on $\mathrm{C}_0(\RR,X)$ and its generator $(\mathcal{A},\dom(\mathcal{A}))$ is given by the multiplication operator on $\mathrm{C}_0(\RR,X)$ induced by the family of differential operators on {$X=\Ell^1\left(\left[0,1\right],\mathbb{C}^m\right)$} defined in \eqref{eqn:DiffOp}-\eqref{eqn:DiffOp-D}, i.e.,
\begin{align*}
(\mathcal{A}f)(s)&=A(s)f(s),\\
\dom(\mathcal{A})&=\left\{f\in\mathrm{C}_0(\RR,X): \  f(s)\in\dom(A(s)) \ { \forall s\in\RR \ \text{and} }  \ A(\cdot)f(\cdot)\in\mathrm{C}_0(\RR,X)\right\}.
\end{align*}
In addition, the set $\rho(\mathcal{A})\cap\rho(A(s))$ is nonempty for each $s\in\RR$ and the resolvent $R(\lambda,\mathcal{A})$ is a bounded operator multiplier for all $\lambda\in\rho(\mathcal{A})$, given by 
\[\left(R(\lambda,\mathcal{A})f\right)(s) = R(\lambda, A(s)) f(s),\quad s\in\RR, f\in\mathrm{C}_0(\RR,X).\]
\end{proposition}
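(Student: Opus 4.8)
The plan is to establish the three claims of the proposition in order: strong continuity of $(\mathcal{S}(t))_{t\geq0}$, identification of the generator $(\mathcal{A},\dom(\mathcal{A}))$, and the resolvent formula. The underlying structure is that $(\mathcal{S}(t))_{t\geq0}$ is a \emph{multiplication semigroup}: at each point $s\in\RR$ it multiplies the value $f(s)$ by the solution semigroup $T_s(t)$ of the autonomous problem associated to $A(s)$. The uniform bounds $m\leq c_j(t)\leq M$ together with the contractivity of each $T_s(t)$ (from \cite[Cor.~18.15]{BKR2017}) should give a locally uniform-in-$s$ control that is the engine for every step. The natural framework is Graser's theory of unbounded operator multipliers \cite{G1997}, which the introduction already signals as the intended tool.

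First I would verify the algebraic semigroup law $\mathcal{S}(t+r)=\mathcal{S}(t)\mathcal{S}(r)$ and $\mathcal{S}(0)=\Id$, which hold pointwise in $s$ because each $(T_s(t))_{t\geq0}$ is a semigroup, and I would check that $\mathcal{S}(t)f$ lands in $\mathrm{C}_0(\RR,X)$: the decay at infinity follows from $\|T_s(t)\|\leq 1$, while continuity in $s$ is the first genuinely technical point and requires the continuity of $s\mapsto T_s(t)$ in the strong operator topology, locally uniformly in $t$. For strong continuity of the semigroup I would estimate $\|\mathcal{S}(t)f-f\|_\infty=\sup_s\|T_s(t)f(s)-f(s)\|_X$ and split $\RR$ into a large compact interval, where I can use the joint continuity and uniform equicontinuity of the family $(T_s(t))$ together with the compactness of $\{f(s):s\in K\}\cup\{0\}$, and the complement, where $\|f(s)\|_X$ is uniformly small by the $\mathrm{C}_0$ condition and $\|T_s(t)\|\leq1$ keeps the difference small. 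This reduces the $t\to0$ convergence to a finite-dimensional-in-spirit equicontinuity statement.

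Next I would identify the generator. The standard approach is to show that the multiplication operator $\mathcal{A}$ defined by $(\mathcal{A}f)(s)=A(s)f(s)$ on the stated domain is exactly the generator. I expect the cleanest route is through the resolvent: using the Hille--Yosida/Laplace-transform identity $R(\lambda,\mathcal{A})=\int_0^\infty e^{-\lambda t}\mathcal{S}(t)\,\dd t$ for $\lambda$ large, one computes pointwise $(R(\lambda,\mathcal{A})f)(s)=\int_0^\infty e^{-\lambda t}T_s(t)f(s)\,\dd t=R(\lambda,A(s))f(s)$, since each $T_s(t)$ is a contraction semigroup so $(0,\infty)\subseteq\rho(A(s))$ with $\|R(\lambda,A(s))\|\leq 1/\lambda$ uniformly in $s$. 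This simultaneously produces the resolvent formula of the last assertion and shows $\rho(\mathcal{A})\cap\rho(A(s))\neq\emptyset$. One must check that $s\mapsto R(\lambda,A(s))f(s)$ is again in $\mathrm{C}_0(\RR,X)$, which again rests on continuity of $s\mapsto R(\lambda,A(s))$ in the strong topology, and that this pointwise resolvent is a bounded operator multiplier in Graser's sense; that the range of the integral resolvent coincides with the stated domain $\dom(\mathcal{A})$ is then a matter of matching the abstract generator domain with the pointwise description.

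The main obstacle I anticipate is the continuity in the parameter $s$ of the solution semigroups, i.e. showing $s\mapsto T_s(t)$ is strongly continuous (locally uniformly in $t$), because the operators $A(s)$ have \emph{domains that vary with} $s$ through the boundary matrix $\BB_C(s)=C(s)^{-1}\BB C(s)$. Unlike the constant-domain case of Section~\ref{sec:ConstDom}, here one cannot directly compare $T_{s_1}(t)$ and $T_{s_2}(t)$ on a common core. The likely remedy is to use the explicit structure of the transport semigroups on the metric graph, where $T_s(t)$ acts as a shift of the profile combined with the boundary redistribution governed by $\BB_C(s)$; since $s\mapsto c_j(s)$ is continuous and bounded away from $0$ and $\infty$, the matrices $\BB_C(s)$ depend continuously on $s$, and one transfers this to strong continuity of the resolvents $R(\lambda,A(s))$ via a continuity argument on the boundary data, which by a Trotter--Kato type theorem yields the required continuity of $s\mapsto T_s(t)$. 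Establishing this continuity rigorously, and then feeding it back into the $\mathrm{C}_0$-invariance and strong-continuity estimates, is where the real work lies.
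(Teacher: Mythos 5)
Your proposal is correct and takes essentially the same route as the paper: the paper likewise reduces everything to strong continuity and uniform boundedness of $s\mapsto R(\lambda,A(s))$ (verified concretely via the explicit resolvent formula of \cite[Prop.~18.12]{BKR2017} with $c_k(s)$ in place of $c_k$), then obtains joint strong continuity of $(s,t)\mapsto T_s(t)$ by the Trotter--Kato theorem, and finally invokes Graser's results \cite[Thm.~3.4, Lem.~3.5, Rem.~3.6]{G1997} for precisely the multiplication-semigroup facts (strong continuity on $\mathrm{C}_0(\RR,X)$, identification of the generator and its domain, and the resolvent being an operator multiplier) that you propose to verify by hand. The only difference is one of packaging: you unpack the abstract machinery (semigroup law, the compactness argument for strong continuity, the Laplace-transform identification of the generator) rather than citing it, while the key technical insight --- continuity of $\mathbb{B}_C(s)$ feeding into strong continuity of the resolvents and then Trotter--Kato --- is exactly the paper's argument.
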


\begin{proof}
The assertion follows directly by \cite[Thm.~3.4 \& Lem.~3.5]{G1997} once we show that the map $\RR\times\RR_{+}\ni(s,t)\mapsto T_s(t)$ is strongly continuous. 
To this end we first prove that the mapping $s\mapsto R(\lambda,A(s))$ is strongly continuous. For any fixed $s\in\RR$  we can use  \cite[Prop.~18.12]{BKR2017}  and obtain an explicit formula for the resolvent $R(\lambda,A(s))$. In particular,  for $\mathrm{Re} \lambda >0$  we have
\[
R(\lambda,A(s))=\left(\Id+E_\lambda(\cdot,s)(1-\mathbb{B}_{C,\lambda}(s))^{-1}\mathbb{B}_{C,\lambda}(s)\otimes\delta_0\right)R_{\lambda}(s),
\]
where
\[
E_{\lambda}(\tau,s)=\diag\left(\ee^{(\lambda/c_k(s))\tau}\right),\quad 
\mathbb{B}_{C,\lambda}(s)=E_\lambda(-1,s)\mathbb{B}_C(s),
\]
$\delta_0$ is the point evaluation at $0$, and
\[
(R_\lambda(s)f)(\tau)=\int_{\tau}^1{E_\lambda(\tau-\xi,s)C^{-1}(s)f(\xi)\ \dd\xi}.
\]
It is easy to see, by use of the explicit resolvent formula, that by our continuity assumptions on $c_j(\cdot)$, the mapping $s\mapsto R(\lambda,A(s))f$ is continuous for every $f\in\mathrm{C}_0(\RR,X)$. Now we estimate 
 \begin{align*}
\|T_{s_0}(t_0)f(s_0) - T_s(t)f(s)\| &\le  \|T_{s_0}(t_0)f(s_0) - T_{s_0}(t)f(s_0)\| \\
  &+ \|T_{s_0}(t)f(s_0) - T_s(t)f(s_0)\|+ \|T_{s}(t)f(s_0) - T_s(t)f(s)\|
 \end{align*}
 and note that the first term tends to $0$ when $t\to t_0$ since the semigroup $(T_{s_0}(t))_{t\geq0}$ is strongly continuous while the third  term tends to $0$ when $s\to s_0$ since $(T_{s}(t))_{t\geq0}$ is contractive and $f$ is continuous. Finally, due to the strong continuity of $s\mapsto R(\lambda,A(s))$ we can apply Trotter--Kato theorem  \cite[Thm.~III.4.9]{EN} and see that also the second term converges to $0$ as $s\to s_0$.
\end{proof}

\subsection{The associated evolution semigroup and its generator}

Inspired by the work of R.~Nagel, G.~Nickel and S.~Romanelli \cite[Sect.~4]{NNR1996} we define another semigroup\footnote{This is unfortunately false in general, see Erratum on p.~\pageref{erratum}.} $(\mathcal{T}(t))_{t\geq0}$ on $\mathrm{C}_0(\RR,X)$ by
\begin{equation}\label{eq:sgr-evol}
(\mathcal{T}(t)f)(s):={(\mathcal{S}(t)f)}(s-t),\quad t\geq0,\ f\in\mathrm{C}_0(\RR,X),\ s\in\RR,
\end{equation}
where $(\mathcal{S}(t))_{t\geq0}$ is the multiplication semigroup given in \eqref{eq:mult-sgr}.
Hence, $(\mathcal{T}(t))_{t\geq0}$ is the composition of the translation semigroup and the multiplication semigroup.  An application of Theorem \ref{thm:CharEvoSemi} shows that $(\mathcal{T}(t))_{t\geq0}$ is an evolution semigroup. 

\medskip
Our next goal is to determine the generator of this semigroup. This can be done, as already mentioned in \cite[Sect.~4]{NNR1996}, by means of extrapolation spaces. Recall, that the first extrapolation space of  Banach space $X$ with respect to  semigroup generator $(A,\dom(A))$ is defined to be the completion of $X$ with respect to a new norm $\left\|\cdot\right\|_{-1}$ on $X$ defined by
\begin{equation}\label{eq:norm-1}
\left\|x\right\|_{-1}:=\left\| {R({\lambda_0}, A)} x\right\|,\quad x\in X
\end{equation}
{for some ${\lambda_0}\in\rho(A)$ (usually one may assume that ${\lambda_0} =0$)}. This completion is denoted by $X_{-1}$ and is called the \emph{first extrapolation space}. If we want to stress the dependence on the operator $(A,\dom(A))$ we write $X_{-1}^{(A)}$. By continuity, we can extend $(T(t))_{t\geq0}$ to a semigroup of linear operators on {$X_{-1}^{(A)}$}. This semigroup is denoted by $(T_{-1}(t))_{t\geq0}$. The first extrapolation space and the corresponding extrapolated semigroup have the following properties.

\begin{proposition}{\cite[Thm.~II.5.5]{EN}}\label{prop:extra}
Let $(T(t))_{t\geq0}$ be a strongly continuous semigroup on $X$ with generator $(A,\dom(A))$. The following assertions hold true.
\begin{iiv}
	\item $X$ is dense in $X_{-1}^{(A)}$.
	\item $(T_{-1}(t))_{t\geq0}$ is a strongly continuous semigroup on $X_{-1}^{(A)}$.
	\item The generator $A_{-1}$ of $(T_{-1}(t))_{t\geq0}$ has domain $\dom(A_{-1})=X$. 
	\item {For $\lambda_0\in\rho(A)$, which is used to define the norm $\left\|\cdot\right\|_{-1}$ in \eqref{eq:norm-1},  $\lambda_0-A_{-1}\colon X\rightarrow X_{-1}^{(A)}$ is the unique extension of $\lambda_0-A\colon\dom(A)\rightarrow X$ to an isometry.}
\end{iiv}
\end{proposition}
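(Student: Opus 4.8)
The plan is to prove the three assertions about the first extrapolation space $X_{-1}^{(A)}$ essentially by unwinding the definition of the new norm $\|\cdot\|_{-1} = \|A^{-1}\cdot\|$ and transporting the semigroup structure through the isometry $A^{-1}$. The organizing observation is that $A^{-1}\colon X \to \dom(A)$ is an isomorphism (with $0\in\rho(A)$ assumed), and $\|x\|_{-1} = \|A^{-1}x\|$ is exactly the norm making $A^{-1}$ an isometry from $(X,\|\cdot\|_{-1})$ into $(X,\|\cdot\|)$; thus $X_{-1}^{(A)}$ is the abstract completion of this normed space. I would carry the steps out in the order (i), (ii), (iii), since each leans on the previous.

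For assertion (i), I would note that $X$ is dense in $X_{-1}^{(A)}$ by the very construction: a completion always contains the original space as a dense subspace, so there is nothing to prove beyond pointing to the definition of completion. For assertion (ii), the first step is to check that each $T(t)$ is continuous for the weaker norm $\|\cdot\|_{-1}$; this follows because $T(t)$ commutes with $R(\lambda,A)$ and hence with $A^{-1}$ on $\dom(A)$, giving $\|T(t)x\|_{-1}=\|A^{-1}T(t)x\|=\|T(t)A^{-1}x\|\le Me^{\omega t}\|A^{-1}x\|=Me^{\omega t}\|x\|_{-1}$. Being bounded for $\|\cdot\|_{-1}$ on the dense subspace $X$, each $T(t)$ extends uniquely by continuity to a bounded operator $T_{-1}(t)$ on the completion $X_{-1}^{(A)}$; the semigroup law and the uniform bound pass to the extension by density, and strong continuity on $X_{-1}^{(A)}$ follows from strong continuity on the dense subspace $X$ together with the uniform bound, via the standard $3\varepsilon$-argument.

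For assertion (iii), the key idea is that the isometry $A^{-1}$ intertwines the extrapolated picture with the original one. Concretely, one checks that $A^{-1}$ extends to an isometric isomorphism $X_{-1}^{(A)}\to X$ which conjugates $(T_{-1}(t))_{t\ge 0}$ on $X_{-1}^{(A)}$ into $(T(t))_{t\ge 0}$ on $X$; under such a conjugation the generators correspond, and one reads off that the generator $A_{-1}$ of $(T_{-1}(t))_{t\ge 0}$ is the operator obtained by transporting $A$ through this isomorphism. Since $A^{-1}$ carries $X$ onto $\dom(A)$, transporting back shows $\dom(A_{-1})=X$, and for $x\in\dom(A)$ the operator $A_{-1}$ agrees with $A$, so $A_{-1}$ is an extension of $A$. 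That $A_{-1}\colon X\to X_{-1}^{(A)}$ is an isometry is immediate from $\|A_{-1}x\|_{-1}=\|A^{-1}A x\|=\|x\|$ for $x\in\dom(A)$ and density/continuity for general $x\in X$, and uniqueness of such an isometric extension follows because $\dom(A)$ is dense in $X$ and a continuous extension is determined on a dense set.

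The main obstacle I anticipate is the bookkeeping in assertion (iii): making precise that $A_{-1}$, defined abstractly as the generator of the extended semigroup, coincides with the transported operator and has domain exactly $X$ rather than merely containing it. The cleanest route is to verify that $A^{-1}$ (extended to $X_{-1}^{(A)}$) maps $\dom(A_{-1})$ isometrically onto $\dom(A)$ and intertwines the two generators, which then pins the domain down by the known domain $\dom(A)=R(\lambda,A)X$ on the original side. Alternatively, rather than reproving these standard facts, I would simply invoke \cite[Chapter II, Thm.~5.5]{EN} and \cite[Thm.~1.5]{Na1997}, which are cited precisely for this purpose, and sketch only the intertwining argument above to indicate why the three statements hold.
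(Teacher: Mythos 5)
Your proposal is correct, and it matches the paper's treatment: the paper states Proposition \ref{prop:extra} without proof, as a known result cited from \cite[Chapter II, Thm.~5.5]{EN} and \cite[Thm.~1.5]{Na1997}, which is exactly the fallback you offer in your last paragraph. The argument you sketch (extend each $T(t)$ by density using $\left\|T(t)x\right\|_{-1}\leq M\ee^{\omega t}\left\|x\right\|_{-1}$, then identify the generator by noting that $A^{-1}$ extends to an isometric isomorphism $X_{-1}^{(A)}\to X$ intertwining $(T_{-1}(t))_{t\geq0}$ with $(T(t))_{t\geq0}$, which pins down $\dom(A_{-1})=X$) is precisely the standard proof underlying those citations, so there is nothing to correct.
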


Typical examples for such extrapolation spaces are Sobolev spaces and weighted $\mathrm{L}^p$-spaces. For more explicit examples we refer to \cite[Ex.~II.5.7,5.8]{EN}, \cite[Sect.~3]{NNR1996} and \cite[Sect.~5]{BF2019}. Similar results have been recently discovered for Bochner $\mathrm{L}^p$-spaces as well, cf.~\cite{BH2022}. Recall that by \cite[Thm.~4.7]{G1997}, the first extrapolation space of $\mathrm{C}_0(\RR,X)$ with respect to the multiplication operator $(\mathcal{A},\dom(\mathcal{A}))$ looks like
\[
\left[\mathrm{C}_0(\RR,X)\right]_{-1}^{\mathcal{A}}=\left\{f\in\prod_{s\in\RR}{{X_{-1}^{A(s)}}}:\ f\ \text{is fiber-continuous and vanishes at infinity}\right\}.
\]
Here, for fixed  $s\in\RR$, $X_{-1}^{A(s)}$ denotes the first extrapolation space of $X$ with respect to the operator $(A(s),\dom(A(s)))$. A function $f\in\prod_{s\in\RR}{X_{-1}^{A(s)}}$ is called \emph{fiber-continuous} if for any $s_0\in\RR$ and $\varepsilon>0$ there exist $x\in X$ and $\delta>0$ such that $\left|s_0-s\right|<\delta$ implies that 
$$\left\|f(s_0)-x\right\|_{-1,s_0}+\left\|f(s)-x\right\|_{-1,s}<\varepsilon,$$
see \cite[Def.~4.4]{G1997}. Moreover, such a function \emph{vanishes at infinity} if $\lim\limits_{\left|s\right|\to\infty}{\left\|f(s)\right\|_{-1,s}}=0$. 
 
\medskip 
At a first glance, it seems that the elements of the family of extrapolation spaces $(X_{-1}^{A(s)})_{s\in\RR}$ may differ from each other and hence the product of the extrapolated translation semigroup and the extrapolated multiplication semigroup may not be well-defined. However, in our case the operators $(A(s),\dom(A(s)))$, $s\in\RR$, have a special appearance.
  {Let ${\lambda_0}\in\CC$ with $\mathrm{Re}({\lambda_0})>0$. By \cite[Cor.~18.13]{BKR2017} one has {${\lambda_0}\in\rho(A(s))$ for all $s\in\RR$.} 
{Now, by \cite[Rem.~6.6(c)]{A1997}, Lemma \ref{lem:adj},} and an application of \cite[Exercise~{II}.5.9(1)]{EN}, 
it follows, that all extrapolation spaces $X_{-1}^{A(s)}$ are equivalent, i.e., 
\begin{equation*}\label{X-1}
X_{-1}^{A(s)}\cong X_{-1}^{A(0)}=:X_{-1} \text{  for all } s\in\RR
\end{equation*}
and there exists $\eta>0$ such that $\frac{1}{\eta}\left\|x\right\|_{X_{-1}}\leq\left\|x\right\|_{X_{-1}^{A(s)}}\leq\eta\left\|x\right\|_{X_{-1}}$ for all $s\in\RR$.} Hence, we have that {$\left[\mathrm{C}_0(\RR,X)\right]_{-1}^{\mathcal{A}}=\mathrm{C}_0(\RR,X_{-1})$.}
By following the procedure in \cite[Sect.~4]{NNR1996}, we 
 can then describe the domain $\dom(\mathcal{G})$ of the generator of the evolution semigroup $(\mathcal{T}(t))_{t\geq0}$.

\begin{theorem}\label{prop:DomEvoSemi}\footnote{This is unfortunately false in general, see Erratum on p.~\pageref{erratum}.}
Assume that $c_j(\cdot)\in\mathrm{C}(\RR)$ and that there exist {$m>0$ and $M>0$} such that $m\leq c_j(t)\leq M$ for each $t\in\RR$.
The generator $(\mathcal{G},\dom(\mathcal{G}))$ of the $C_0$-semigroup $(\mathcal{T}(t))_{t\geq0}$ on $\mathrm{C}_0(\RR,X)$ defined in \eqref{eq:sgr-evol} is given by
\begin{equation}\label{eqn:DomEvoSemi}
\begin{aligned}
(\mathcal{G}f)(s)&=A_{-1}(s)f(s)-f'(s),\\
\dom(\mathcal{G})&=\left\{f\in\mathrm{C}_0(\RR,X)\cap\mathrm{C}_0^1(\RR,X_{-1}):\ {\mathcal{A}_{-1}} f-f'\in\mathrm{C}_0(\RR,X)\right\}.
\end{aligned}
\end{equation}
\end{theorem} 

{
\begin{proof}
As mentioned above, one has that $\left[\mathrm{C}_0(\RR,X)\right]_{-1}^{\mathcal{A}}=\mathrm{C}_0(\RR,X_{-1})$. On this space we consider the extrapolated multiplication semigroup $(\mathcal{S}_{-1}(t))_{t\geq0}$ which is defined by
\[
(\mathcal{S}_{-1}(t)f)(s):=T_{-1,s}(t)f(s),
\]
where $(T_{-1,s}(t))_{t\geq0}$ is the extrapolated semigroup for  $(T_s(t))_{t\geq0}$  on  $X_{-1}$.
Its generator is  denoted by $\mathcal{A}_{-1}$ and is by  \cite[Lem.~4.6]{G1997} given as
\[\left(\mathcal{A}_{-1}f\right)(s)= A_{-1}(s)f(s),\quad \dom(\mathcal{A}_{-1}) = \mathrm{C}_0(\RR,X).\]
On the space $\mathrm{C}_0(\RR,X_{-1})$ we also consider the operator $(\mathcal{B},\dom(\mathcal{B}))$ defined by
\[
\mathcal{B}f:=f',\quad \dom(\mathcal{B}):=\mathrm{C}^1_0(\RR,X_{-1}),
\]
which  generates the right translation semigroup on $\mathrm{C}_0(\RR,X_{-1})$. Now, define the {evolution} semigroup $(\widetilde{\mathcal{T}}(t))_{t\geq0}$ on $\mathrm{C}_0(\RR,X_{-1})$ by
\[
(\widetilde{\mathcal{T}}(t)f)(s):={\left(\mathcal{S}_{-1}(t)f\right)}(s-t),\quad t\ge 0, s\in \RR.
\]
{Since our multiplication and translation semigroups commute,}
one obtains for the generator $(\widetilde{\mathcal{G}},\dom(\widetilde{\mathcal{G}}))$ {of the multiplication semigroup  $(\widetilde{\mathcal{T}}(t))_{t\geq0}$} that for every $f\in\dom(\mathcal{A}_{-1})\cap\dom(\mathcal{B})${, which is a core for $\widetilde{\mathcal{G}}$,} one has 
\[
(\widetilde{\mathcal{G}}f)(s)=-f'(s)+(\mathcal{A}_{-1}f)(s)=-f'(s)+A_{-1}(s)f(s),\quad s\in\RR.
\]
{see \cite[Sec.~II.2.7]{EN}. }
By observing that $(\mathcal{T}(t))_{t\geq0}$ given in \eqref{eq:sgr-evol} coincides with  $(\widetilde{\mathcal{T}}(t))_{t\geq0}$ {restricted to the space} $\mathrm{C}_0(\RR,X)$, one has that {its generator $\mathcal{G}$ is the part of $\widetilde{\mathcal{G}}$ in $\mathrm{C}_0(\RR,X)$, that is,}
\[
\dom(\mathcal{G})=\{f\in\mathrm{C}_0(\RR,X)\cap\dom(\widetilde{\mathcal{G}}):\ \widetilde{\mathcal{G}}f\in\mathrm{C}_0(\RR,X)\}
\]
and $\mathcal{G}f=\widetilde{\mathcal{G}}f$ for all $f\in\dom(\mathcal{G})$, see \cite[Sec.~II.2.3]{EN}. {It only remains to specify the domain $\dom(\mathcal{G})$. We already know that 
\[\mathrm{C}_0(\RR,X_{-1}) \supset  \dom(\widetilde{\mathcal{G}}) \supset \dom(\mathcal{A}_{-1})\cap\dom(\mathcal{B}) =  \mathrm{C}_0(\RR,X) \cap \mathrm{C}^1_0(\RR,X_{-1}).\]
Now take $f\in\mathrm{C}_0(\RR,X)\cap\dom(\widetilde{\mathcal{G}}) =  \dom(\mathcal{A}_{-1})\cap\dom(\widetilde{\mathcal{G}})$.} Then both $\lim\limits_{t\to0}{\frac{\mathcal{S}_{-1}(t)f-f}{t}}$ and $\lim\limits_{t\to0}{\frac{\widetilde{\mathcal{T}}(t)f-f}{t}}$ exist in $\left[\mathrm{C}_0(\RR,X)\right]_{-1}^{\mathcal{A}}=\mathrm{C}_0(\RR,X_{-1})$. We notice that
\begin{align}
&\lim_{t\to0}{\frac{(\widetilde{\mathcal{T}}(t)f-f)(s)}{t}}-\lim_{t\to0}{\frac{(\mathcal{S}_{-1}(t)f-f)(s)}{t}}\notag \\
&=\lim_{t\to0}{\frac{({\mathcal{S}}_{-1}(t)f)(s-t)-f(s)}{t}}-\lim_{t\to0}{\frac{(\mathcal{S}_{-1}(t)f)(s)-f(s)}{t}}\notag \\
&=\lim_{t\to0}{\frac{T_{-1,s-t}(t)f(s-t)-f(s)}{t}}-\lim_{t\to0}{\frac{T_{-1,s}(t)f(s)-f(s)}{t}}\notag \\
&=\lim_{t\to0}{\frac{T_{-1,s-t}(t)f(s-t)-f(s-t)}{t}} -\lim_{t\to0}{\frac{T_{-1,s}(t)f(s)-f(s)}{t}} + \lim_{t\to0}{\frac{f(s-t)-f(s)}{t}}.\label{eq:last}
\end{align}
Observe, that $f(\tilde{s}) \in \dom(A_{-1}(\tilde{s}))=X$ for each $\tilde{s}\in\RR$ and the operator $A_{-1}(\tilde{s})$ generates $(T_{-1,\tilde{s}}(t))_{t\geq0}$ on $X_{-1}$. {Moreover,  we will now show that the mapping $\tilde{s}\mapsto A_{-1}(\tilde{s})x$ in continuous for all $x\in X$. Indeed, let $s,\tilde{s}\in\RR$ and $x\in X$. First recall that $n\in\rho(A(s))$ for all $s\in\RR$  (see \cite[Cor.~18.13]{BKR2017}) and we can define two sequences $(x_n)_{n\in\NN}$ and $(\tilde{x}_n)_{n\in\NN}$,
\[
x_n:=nR(n,A_{-1}(s))x=nR(n,A(s))x\quad\text{and}\quad\tilde{x}_n:=nR(n,A_{-1}(\tilde{s}))x=nR(n,A(\tilde{s}))x.
\]
By \cite[Lem.~II.3.4(i)]{EN} we know that $x_n\to x$ and $\tilde{x}_n\to x$ as $n\to\infty$. We also see that by the resolvent identity one gets
\begin{equation}\label{eq:res}
A_{-1}(s)x_n=n^2R(n,A(s))x-nx\quad\text{and}\quad A_{-1}(\tilde{s})\tilde{x}_n=n^2R(n,A(\tilde{s}))x-nx.
\end{equation}
Fix some $\lambda_0\in\rho(A(s))$, which is used to define the norm $\left\|\cdot\right\|_{-1}$ in \eqref{eq:norm-1}, and let $\varepsilon>0$ be arbitrary. Then
\begin{align}
\left\|A_{-1}(s)x-A_{-1}(\tilde{s})x\right\|_{-1} =&\left\|(\lambda_0-A_{-1}(s))x-(\lambda_0-A_{-1}(\tilde{s}))x\right\|_{-1} \notag \\
\leq &\left\|(\lambda_0-A_{-1}(s))(x-x_n)\right\|_{-1}+\left\|(\lambda_0-A_{-1}(s))x_n-(\lambda_0-A_{-1}(\tilde{s}))\tilde{x}_n\right\|_{-1} \notag \\
& +\left\|(\lambda_0-A_{-1}(\tilde{s}))(\tilde{x}_n-x)\right\|_{-1} \notag\\
\leq&\left\|x_n-x\right\|+\left\|\lambda_0(x_n-\tilde{x}_n)\right\|_{-1}+\left\|A_{-1}(\tilde{s})\tilde{x}_n-A_{-1}(s)x_n\right\|_{-1}+\left\|\tilde{x}_n-x\right\|,\label{eq:epsilon/4}
\end{align}
where in the last step we used the isometric property of operators $\lambda_0-A_{-1}(s)$ and $\lambda_0-A_{-1}(\tilde{s})$, see Proposition \ref{prop:extra}.
For $n$ large enough, we have $\left\|x_n-x\right\|<\varepsilon/4$ and $\left\|\tilde{x}_n-x\right\|<\varepsilon/4$. 
Further, by applying the definition of both sequences we get
\[
\left\|\lambda_0 (x_n-\tilde{x}_n)\right\|_{-1}\leq\lambda_0 C n\left\|R(n,A(s))x-R(n,A(\tilde{s}))x\right\|
\]
and, by \eqref{eq:res}, also
\[
\left\|A_{-1}(\tilde{s})\tilde{x}_n-A_{-1}(s)x_n\right\|_{-1}\leq n^2 D\left\|R(n,A(s))x-R(n,A(\tilde{s}))x\right\|.
\]
that for some constants $C>0$ and $D>0$. 
As already observed in the proof of Proposition \ref{prop:MultiSemi}, the assumption $c_j(\cdot)\in\mathrm{C}(\RR)$ implies the strong continuity of the map $s\mapsto R(n,A(s))$. In particular, we can find $\delta>0$ such that for $\left|s-\tilde{s}\right|<\delta$ one has that
\[
\left\|R(n,A(s))x-R(n,A(\tilde{s}))x\right\|<\frac{\varepsilon}{4n^2},
\]
showing that also the remaining two summands in \eqref{eq:epsilon/4} become arbitrarily small as soon as $\left|s-\tilde{s}\right|<\delta$. We have thus shown that $\left\|A_{-1}(s)x-A_{-1}(\tilde{s})x\right\|<\varepsilon$ whenever $\left|s-\tilde{s}\right|<\delta$.} {Now, by combining the continuity of $\tilde{s}\mapsto A_{-1}(\tilde{s})x$ with the continuity of $(\tilde{s},t)\mapsto T_{-1,\tilde{s}}(t)f(\tilde{s})$ (shown in  the proof of Proposition \ref{prop:MultiSemi}) we see that the first limit in \eqref{eq:last} equals $A_{-1}(s)f(s)$. The second limit,  by definition, also equals $A_{-1}(s)f(s)$ and hence, the first two limits in  \eqref{eq:last}  cancel out.} We obtain that $\lim\limits_{t\to0}{\frac{f(s-t)-f(s)}{t}}$  exists in $\mathrm{C}_0(\RR,X_{-1})$ and thus $f\in\dom(\mathcal{B})=\mathrm{C}^1_0(\RR,X_{-1})$
which finally yields
\[
\mathrm{C}_0(\RR,X)\cap \dom(\widetilde{\mathcal{G}}) =\mathrm{C}_0(\RR,X)\cap\mathrm{C}^1_0(\RR,X_{-1}).
\]
 \end{proof}
}

Now we are in the position to show that  problem \eqref{nACP-F} - and thus \eqref{eqn:nF} - is well-posed.

\begin{corollary}\label{cor:WellPosnF}\footnote{This is unfortunately false in general, see Erratum on p.~\pageref{erratum}.}
Assume that $c_j(\cdot)\in\mathrm{C}(\RR)$ and that there exist {$m>0$ and $M>0$} such that $m\leq c_j(t)\leq M$ for each $t\in\RR$. Then \eqref{eqn:nF} is a well-posed problem. 
\end{corollary}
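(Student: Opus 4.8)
The plan is to read off the well-posedness of \eqref{eqn:nF} from the implication (b)$\Rightarrow$(a) in Theorem~\ref{thm:WPEvoSemi}, feeding into it the evolution semigroup $(\mathcal{T}(t))_{t\geq0}$ and its generator $(\mathcal{G},\dom(\mathcal{G}))$ produced in the two preceding propositions. The standing hypotheses $c_j(\cdot)\in\mathrm{C}(\RR)$ and $m\leq c_j(t)\leq M$ are exactly those required there, so under these assumptions the multiplication semigroup $(\mathcal{S}(t))_{t\geq0}$ is strongly continuous, the composition $(\mathcal{T}(t))_{t\geq0}$ from \eqref{eq:sgr-evol} is a genuine evolution semigroup by Theorem~\ref{thm:CharEvoSemi}, and its generator acts as $(\mathcal{G}f)(s)=A(s)f(s)-f'(s)$ on the domain described above. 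Nothing new about the construction needs to be proved; the whole task is to check that this data meets the list of requirements in Theorem~\ref{thm:WPEvoSemi}(b).

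Concretely, I would take the candidate core to be $D:=\dom(\mathcal{G})$ itself and verify three points. First, $D\subseteq\mathrm{C}_0^1(\RR,X)$: although a priori an element of $\dom(\mathcal{G})$ is only differentiable as an $X_{-1}$-valued map, the defining condition $\mathcal{A}f-f'\in\mathrm{C}_0(\RR,X)$ forces $f'$ to take values in $X$ and to be continuous there, which is precisely the observation $\dom(\mathcal{G})\cap\mathrm{C}_0^1(\RR,X)=\dom(\mathcal{G})$ recorded just before the statement. Second, $D$ is an invariant core for $\mathcal{G}$: being the full domain of a generator it is automatically dense, $(\mathcal{T}(t))$-invariant, and trivially a core. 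Third, the intertwining relation $\mathcal{G}f+f'=A(\cdot)f$ holds for every $f\in D$, which is immediate from the generator formula, since $(\mathcal{G}f+f')(s)=A(s)f(s)=(A(\cdot)f)(s)$.

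Having confirmed (b), Theorem~\ref{thm:WPEvoSemi} yields that the abstract problem \eqref{nACP-F} is well-posed. Finally I would invoke the lemma asserting that \eqref{nACP-F} is equivalent to the concrete transport problem \eqref{eqn:nF} to transfer well-posedness back to \eqref{eqn:nF} and conclude.

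The step I expect to carry the real content is the first one, namely the inclusion $\dom(\mathcal{G})\subseteq\mathrm{C}_0^1(\RR,X)$, i.e.\ the $X$-valued regularity of $f'$. This is the one place where the structure of the extrapolation space $X_{-1}$ and the uniform bounds $m\leq c_j\leq M$, which guarantee $[\mathrm{C}_0(\RR,X)]_{-1}^{\mathcal{A}}=\mathrm{C}_0(\RR,X_{-1})$ with equivalent fiber norms, genuinely enter. The remaining bookkeeping---density, invariance, the intertwining identity, and the transfer through the equivalence lemma---is routine, so at the level of the corollary the obstacle has already been dispatched in the propositions, and only the assembly remains.
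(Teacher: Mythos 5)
Your assembly is exactly the paper's own proof: the paper disposes of this corollary in the two sentences immediately preceding it, by observing $\dom(\mathcal{G})\cap\mathrm{C}_0^1(\RR,X)=\dom(\mathcal{G})$, noting that $\dom(\mathcal{G})$ is trivially an invariant core, applying Theorem \ref{thm:WPEvoSemi} with $D=\dom(\mathcal{G})$, and transferring well-posedness from \eqref{nACP-F} to \eqref{eqn:nF} through the equivalence lemma of Section 1. Your second and third verification points are indeed routine: the full domain of a generator is dense and invariant, and once $f\in\dom(\mathcal{G})\cap\mathrm{C}_0^1(\RR,X)$ one has $A_{-1}(s)f(s)=(\mathcal{G}f)(s)+f'(s)\in X$, hence $f(s)\in\dom(A(s))$ and the identity $\mathcal{G}f+f'=A(\cdot)f$ holds in the required sense.

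However, your first point --- the inclusion $\dom(\mathcal{G})\subseteq\mathrm{C}_0^1(\RR,X)$, which you rightly identify as carrying the real content --- is not established by the argument you give, and this is a genuine gap. You claim that the condition $\mathcal{A}f-f'\in\mathrm{C}_0(\RR,X)$ ``forces $f'$ to take values in $X$''. That is a non-sequitur: in the domain description, $\mathcal{A}f$ must be read as $\mathcal{A}_{-1}f$, i.e.\ $s\mapsto A_{-1}(s)f(s)$, which is a priori only $X_{-1}$-valued (nothing guarantees $f(s)\in\dom(A(s))$), and $f'$ is likewise only $X_{-1}$-valued; the hypothesis controls only their \emph{difference}, and a difference of two $X_{-1}$-valued terms can lie in $X$ with neither term doing so. Indeed $f'(s)\in X$ if and only if $A_{-1}(s)f(s)\in X$, if and only if $f(s)\in\dom(A(s))$ --- which is precisely as unknown as what you are trying to prove. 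For first-order operators of the kind at hand this is a real phenomenon, not a formality: in the autonomous model case $A=\dd/\dd x$ on $X=\mathrm{C}_0(\RR)$, the function $f(s)=\varphi(s)\,h(\cdot+s)$ with $\varphi\in\mathrm{C}_{\mathrm{c}}^1(\RR)$ and $h\in\mathrm{C}_0(\RR)$ nowhere differentiable satisfies $(\mathcal{T}(t)f)(s)=\varphi(s-t)h(\cdot+s)$, hence lies in $\dom(\mathcal{G})$ with $(\mathcal{G}f)(s)=-\varphi'(s)h(\cdot+s)$, yet $f\notin\mathrm{C}^1(\RR,X)$ and $f(s)\notin\dom(A)$; analogous ``diagonal'' functions $s\mapsto\varphi(s)T(s+c)g$ with $g\notin\dom(A)$ exist for the (non-differentiable) network transport semigroups. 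To be fair, the paper asserts the same equality with no more justification than you offer, so you have not missed an argument the paper supplies; but a complete proof would have to either establish this inclusion using specific structure of the operators $A(s)$, or else work with a strictly smaller invariant core $D\subseteq\mathrm{C}_0^1(\RR,X)\cap\dom(\mathcal{G})$ (which Theorem \ref{thm:WPEvoSemi} permits) and verify the core and invariance properties for it by hand.
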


\begin{proof}
First observe that, $\dom(\mathcal{G})\cap\mathrm{C}_0^1(\RR,X)=\dom(\mathcal{G})$ as we showed that $\mathrm{C}_0(\RR,X)\cap \dom(\widetilde{\mathcal{G}}) =\mathrm{C}_0(\RR,X)\cap\mathrm{C}^1_0(\RR,X_{-1})$ and $\mathcal{G}$ is the part of $\widetilde{\mathcal{G}}$ in $\mathrm{C}_0(\RR,X)$.
By Theorem \ref{prop:DomEvoSemi}, one has that $\dom(\mathcal{G})$ is the domain of the evolution semigroup generator, so $\dom(\mathcal{G})$ is a core.  Hence, by applying Theorem \ref{thm:WPEvoSemi} we obtain the desired result.
\end{proof}

Unfortunately, we can not give an explicit expression for the corresponding evolution family due to the fact that there is no closed formula for semigroups $(T_s(t))_{t\geq0}$. Theoretically speaking, given an evolution semigroup $(\mathcal{T}(t))_{t\geq0}$ on $\mathrm{C}_0(\RR,X)$ one defines the corresponding evolution family implicitly by
\begin{align}\label{eqn:EvoFamBySemi}
U(t,s)x:=(T_{\mathrm{r}}(s-t)\mathcal{T}(t-s)f)(s),\quad s\in\RR,\ t\geq s,
\end{align}
for $f\in\mathrm{C}_0(\RR,X)$ with $f(s) = x$ and $(T_\mathrm{r}(t))_{t\geq0}$ the right-translation semigroup on $\mathrm{C}_0(\RR,X)$.

\begin{remark}
\begin{abc}
	\item Following the construction of the evolution family given by \eqref{eqn:EvoFamBySemi} via the contraction semigroups $(T_s(t))_{t\geq0}$, $(\mathcal{S}(t))_{t\geq0}$ , $(\mathcal{T}(t))_{t\geq0}$, and translation semigroups we see that $(U(t,s))_{t\geq s}$ actually becomes an evolution family of contractions, i.e., $\left\|U(t,s)\right\|\leq1$ for all $t\geq s$. 
	\item Since each of the operators $(A(t),\dom(A(t)))$, $t\in\RR$,  generates a positive operator semigroup, our construction also yields that  the evolution family $(U(t,s))_{t\geq s}$ obtained in \eqref{eqn:EvoFamBySemi}  consists of positive operators.
 \item  The analysis of the asymptotic behavior of the solutions to non-autonomous problems is in general a very difficult task. 
Batty, Chill, and Tomilov characterized strong stability of a bounded evolution family on a Banach space $X$ by the stability of its associated evolution semigroup on $\mathrm{L}^p( \RR_+,X)$ for some $1 \le p\le \infty$ or, equivalently, by the density of the range of the corresponding  generator on $\mathrm{L}^1( \RR_+,X)$ see  \cite[Thm.~2.2]{BCT2002}. These conditions are however not easy to verify in our case. 

\item Note, that most of our results hold also in the case of non-constant  weights  $w_{ij}(t)$ on the edges as long as  \eqref{eqn:noAbsorb} holds for every $t\in\RR$. Therefore we obtained a generalisation of well-posedness results from Bayazit et al. \cite{BDF2013}. The authors of \cite{BDF2013} also studied longterm  behaviour for the operators with periodic non-autonomous domain. They used an explicit description of the evolution family and the fact that in this case the period-map is a multiplication operator. 
However, such results are not available for our general situation so, a-priori, no statements regarding the asymptotical behaviour are possible.

\end{abc}
\end{remark}

Finally, we consider a slightly modified version of \eqref{eqn:nF} by adding a time-varying absorption term. In particular, we consider the equation given by
\begin{align}\label{eqn:pnF}
\tag{pnF}
\begin{cases}
\frac{\partial}{\partial t}u_j(t,x)=c_j(t)\frac{\partial}{\partial x}u_j(t,x)+q_j(t,x)u_j(t,x),&\quad x\in\left(0,1\right),\ t\geq 0,\\
u_j(t,0)= f_j(x),&\quad x\in\left(0,1\right),\\ 
\phi^-_{ij}c_j(t)u_j(t,1)=\omega_{ij}\sum_{k\in\mathbb{N}}{\phi^+_{ik}c_k(t)u_k(t,1)},&\quad t\geq0.
\end{cases}
\end{align}
Then \eqref{eqn:pnF} can be written as an abstract Cauchy problem of the form
\begin{align}\label{eqn:nACPp}
\begin{cases}
\dot{u}(t)=(A(t)+B(t))u(t),&\quad t,s\in\RR,\ t\geq s,\\
u(s)=\mathsf{f},
\end{cases}
\end{align}
where the family of operators $(A(t),\dom(A(t)))_{t\in\RR}$ is again defined by \eqref{eqn:DiffOp}--\eqref{eqn:DiffOp-D}, $\mathsf{f} = (f_j)_{j=1}^m$,  and the family of bounded linear operators $(B(t))_{t\in\RR}$ is defined by
\begin{align}
B(t):=\mathrm{diag}(M_{{ q_j(t,\cdot)}}),
\end{align}
where $M_{{ q_j(t,\cdot)}}$ denotes the multiplication operator { on $X$ induced by the function $q_j(t,\cdot)$. Under sufficient regularity conditions on functions $q_j$, classical bounded perturbation results \cite[Cor.~VI.9.20]{EN} yield the following.}

\begin{corollary}\label{thm:PertNonAutFlow}\footnote{This is unfortunately false in general, see Erratum on p.~\pageref{erratum}.}
Let $c_j(\cdot)\in\mathrm{C}(\RR)$ such that there exist {$m>0$ and $M>0$} with $m\leq c_j(t)\leq M$ for each $t\in\RR$. Under the assumption that $q_j(t,\cdot)\in\mathrm{L}^{\infty}\left(\left[0,1\right],\CC^m\right)$ and $q_j(\cdot,x)\in\mathrm{C}_\mathrm{b}(\RR,X)$, there exists a unique  evolution family $(U_B(t,s))_{t\geq s}$, generated by the family of operators $(A(t)+B(t),\dom(A(t)))_{t\in\RR}$.
\end{corollary}

It is worth to mention, that Corollary \ref{thm:PertNonAutFlow} does not imply that \eqref{eqn:pnF} is well-posed, see  \cite[Ex.~VI.9.21]{EN}. Nonetheless, for the evolution family  $(U_B(t,s))_{t\geq s}$ obtained in the theorem, the function  $U_B(\cdot,s)\mathsf{f}$ can be interpreted as a \emph{mild solution} of the problem \eqref {eqn:nACPp}.

\section*{Data Availability Statements}
Data sharing not applicable to this article as no datasets were generated or analysed during the current study.

\section*{Acknowledgement}
{The authors are indebted to Abdelaziz Rhandi for valuable discussions and his contribution to the proof of Theorem \ref{prop:DomEvoSemi}. They are also thankful to the anonymous referee for the valuable feedback on the manuscript.} Both authors acknowledge financial support,  C.B. from the Deutsche Forschungsgemeinschaft (DFG, German Research Foundation) - 468736785 and M.K.F from the Slovenian Research Agency, Grant No.~P1-0222. {This work is also based on the research supported by the National Research Foundation of South Africa (Grant number: SRUG220317136). It is acknowledged that opinions, findings and conclusions or recommendations expressed in any publication generated by this supported research is that of the author(s). The National Research Foundation accepts no liability whatsoever in this regard.} Moreover, this article is based upon work from COST Action 18232 MAT-DYN-NET, supported by COST (European Cooperation in Science and Technology), www.cost.eu.

\newpage

\begin{center}
\textbf{ERRATUM ON: WELL-POSEDNESS OF NOM-AUTONOMOUS TRANSPORT EQUATION ON METRIC GRAPHS}
\end{center}

\bigskip
\begin{center}
CHRISTIAN BUDDE AND MARJETA KRAMAR FIJAV\v{Z}
\end{center}

\bigskip
This erratum concerns the publication \cite{BKF24}. The authors observed that the family of operators defined in (4.2) does not define an operator semigroup as claimed. This is due to the fact that the multiplication semigroup $(\mathcal{S}(t))_{t\geq0}$ and the translation semigroup do not commute unless one deals with the autonomous situation as it is done in \cite{NNR1996} or one has the constant domain situation as studied in Section 3.2. Hence, Theorem~4.4 and its Corollaries do not hold as stated. 
\label{erratum}

Moreover, let us add that also  the equality $\dom(\mathscr{G})\cap\mathrm{C}_0^1(\mathbb{R},X)=\dom(\mathscr{G})$ stated at the beginning of the proof of Corollary 4.5. does not hold in general, as assumed. 

\section*{Acknowledgement}
We would like to thank Roland Schnaubelt, Sascha Trostorff, and Marcus Waurick for bringing this issues to our attention.

\end{document}